\newtheorem{corollary}{Corollary}[section]
\newtheorem{lemma}{Lemma}[section]
\numberwithin{equation}{section}
\theoremstyle{plain} 
\newtheorem{theorem}{Theorem}[section]
\theoremstyle{thmstyletwo}%
\theoremstyle{thmstylethree}%
\begin{document}

\title[Article Title]{Location of Zeros of Holomorphic Functions}
\author[1]{\fnm{Leonardo} \sur{de Lima}}\email{leonardo.lima.88@edu.ufes.br}
\affil[1]{\orgdiv{Departamento de Ciências Exatas}, \orgname{Universidade Federal do Espírito Santo - UFES}, \orgaddress{\street{Av. Fernando Ferrari, 514}, \city{Vitória}, \state{Espírito Santo}, \country{Brazil}}}

\abstract{In this article, various results will be demonstrated that enable the delimitation of a zero-free region for holomorphic functions on a set $K$, studying the behavior of their imaginary or real part on the boundary of $K$. These findings contribute to a deeper understanding of the distribution of zeros, shedding light on the intricate nature of holomorphic functions within the specified set. }

\keywords{Holomorphic Functions,
Zeros of Analytic Functions, Zero-Free Sets.}

\pacs[MSC Classification]{30-XX,11-XX.}

\maketitle

\section{Introduction}

The delimitation of zeros is a central theme in the study of holomorphic functions. Indeed, Hadamard's Theorem establishes that zeros uniquely characterize the function, except for the presence of an exponential term of a polynomial \cite{stein1970complex}. Moreover, this topic poses a significant mathematical challenge, as Riemann's hypothesis is confined to bounding a zero-free region for the Riemann zeta function \cite{conrey2003riemann}.

In this article, various results will be demonstrated that enable the delimitation of a zero-free region for holomorphic functions on a set \(K\), studying the behavior of their imaginary or real part on the boundary of \(K\).

\section{Harmonic Functions}
In this section, I will state some basic properties of harmonic functions that will be used later in this work.

A function \( u = u(x, y) \) is called harmonic if it is real and has continuous partial derivatives of order one and two, satisfying

\begin{equation}
\Delta u = \frac{\partial^2 u}{\partial x^2} + \frac{\partial^2 u}{\partial y^2} = 0
\end{equation}\vspace{0.2cm}

A notable property of harmonic functions is that they do not admit local extremes unless they are constant. This property is known as the maximum principle, precisely:
\vspace{0.5cm}

\begin{theorem}[Maximum Principle]
    Let \( u: U \to \mathbb{R} \) be a harmonic function. If \( K \) is a non-empty compact subset of \( U \), then \( u \) restricted to \( K \) attains its maximum and minimum on the boundary of \( K \). If \( U \) is connected, this means that \( u \) cannot have local maxima or minima, except in the case where \( u \) is constant.
\end{theorem}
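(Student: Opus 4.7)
The plan is to reduce everything to the mean value property of harmonic functions, namely $u(z_0) = \frac{1}{2\pi}\int_0^{2\pi} u(z_0 + re^{i\theta})\,d\theta$ whenever $\overline{B(z_0,r)} \subset U$. The central local lemma I will prove is: if $u \le M$ on a neighborhood of $z_0$ and $u(z_0) = M$, then $u \equiv M$ on a small open disk around $z_0$. This falls out of the mean value property, because the integrand is $\le M$ while its average equals $M$, forcing equality pointwise on every small circle about $z_0$, and hence on a whole disk.

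For the compact set statement, set $M = \max_K u$ (which exists by continuity on $K$) and $A = \{z \in K : u(z) = M\}$. I assume for contradiction that $A \cap \partial K = \emptyset$, so $A \subset \mathrm{int}(K)$. Pick $z_0 \in A$ and let $W$ be the connected component of $\mathrm{int}(K)$ containing $z_0$; it is open since $\mathbb{R}^2$ is locally connected. Applying the local lemma inside $W$ shows that $\{z \in W : u(z) = M\}$ is simultaneously open (by the lemma) and closed (by continuity) in $W$, hence coincides with all of $W$.

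The contradiction then rests on a topological step: since $W$ is a bounded open subset of $\mathbb{R}^2$, $\partial W$ is non-empty, and moreover $\partial W \subset \partial K$. Indeed, if some $p \in \partial W$ lay in $\mathrm{int}(K)$, it would have a connected open neighborhood $N \subset \mathrm{int}(K)$; because $p \in \overline{W}$, $N$ meets $W$, so $N \cup W$ is a connected subset of $\mathrm{int}(K)$ that strictly enlarges $W$ unless $p \in W$, contradicting $p \in \partial W$. Continuity then gives $u(p) = M$ for every $p \in \partial W \subset \partial K$, contradicting $A \cap \partial K = \emptyset$. The minimum half of the statement follows by applying the same argument to $-u$.

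For the connected $U$ clause, suppose $u$ has a local maximum at some $z_0 \in U$. The local lemma produces a disk on which $u$ is identically $u(z_0)$. Since harmonic functions are real-analytic (locally the real part of a holomorphic function), the identity principle forces $u \equiv u(z_0)$ throughout the connected set $U$; local minima are handled symmetrically. The most delicate point in the whole argument is the topological identification $\partial W \subset \partial K$ for an \emph{arbitrary} compact set $K$ rather than the closure of a smooth domain; the connected-component argument above is what makes this work without imposing extra regularity on $K$.
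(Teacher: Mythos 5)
Your proof is correct, but there is nothing in the paper to match it against: the paper's ``proof'' of this theorem is only a citation to \cite{lang1985complex}, Chapter 8, so your argument is a genuine self-contained addition rather than a variant of the paper's. What you give is the standard mean-value-property proof, executed carefully: the local lemma (an average equal to the maximum of an upper-bounded continuous integrand forces equality on every small circle, hence on a disk) is sound; the open-and-closed argument on a connected component $W$ of $\mathrm{int}(K)$ is sound; and you correctly identify the one point that needs real care for an \emph{arbitrary} compact $K$, namely the inclusion $\partial W \subset \partial K$, which your maximality-of-component argument establishes (it is worth stating explicitly that $\partial W \subset \overline{W} \subset K$ because $K$ is closed, which is what places $\partial W$ inside $K$ so that ``not in $\mathrm{int}(K)$'' yields ``in $\partial K$''). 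For the final clause on connected $U$, your appeal to real-analyticity and the identity principle works; a slightly more economical route, staying entirely inside the paper's toolkit (Theorem 2.2 plus the identity theorem for holomorphic functions), is to note that $g = u_x - iu_y$ is holomorphic on $U$, vanishes on the disk produced by your local lemma, hence vanishes identically on the connected set $U$, so $\nabla u \equiv 0$ and $u$ is constant. Either way, your write-up supplies exactly the proof the paper omits.
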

\begin{proof}
    \cite[Chapter 8]{lang1985complex}
\end{proof}

The connection between harmonic functions and holomorphic functions is established by Theorem (2.2).\vspace{0.5cm}

\begin{theorem}
    For any holomorphic function \(F=u+iv\), the real part \(u\) and imaginary part \(v\) are harmonic functions in \(\mathbb{R}^2\). Moreover, conversely, for any harmonic function \(u\) on an open subset \(\Omega\) of \(\mathbb{R}^2\), \(u\) is locally the real part of a holomorphic function.
\end{theorem}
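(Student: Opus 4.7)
The plan is to prove the two implications separately, using the Cauchy--Riemann equations as the bridge between holomorphy and harmonicity.

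For the forward direction, I would start from the fact that a holomorphic $F = u + iv$ satisfies the Cauchy--Riemann equations $u_x = v_y$ and $u_y = -v_x$. Since holomorphic functions are $C^\infty$ (a standard consequence of the Cauchy integral formula, which I would cite rather than reprove), I may differentiate these identities a second time: $u_{xx} = v_{yx}$ from the first, and $u_{yy} = -v_{xy}$ from the second. Adding them and invoking equality of mixed partials gives $\Delta u = 0$. The same argument, started from $v_y = u_x$ and $v_x = -u_y$, yields $\Delta v = 0$. This part is routine.

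For the converse, fix a harmonic $u$ on $\Omega$ and a point $p \in \Omega$, and choose an open disk $D \subset \Omega$ centered at $p$. The task is to produce a harmonic conjugate $v$ on $D$, i.e.\ a real $C^2$ function on $D$ satisfying $v_x = -u_y$ and $v_y = u_x$; once we have it, $F := u + iv$ satisfies the Cauchy--Riemann equations on $D$ and is therefore holomorphic there. I would construct $v$ by integrating the $1$-form $\omega := -u_y\,dx + u_x\,dy$. This form is closed: the compatibility condition $(-u_y)_y = (u_x)_x$ is exactly $u_{xx} + u_{yy} = 0$, which holds by harmonicity of $u$. Since $D$ is simply connected (in fact star-shaped), the Poincaré lemma furnishes a primitive $v$ with $dv = \omega$, that is, $v_x = -u_y$ and $v_y = u_x$, as required. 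Concretely one may define
\[
v(x,y) = \int_{\gamma_{(x,y)}} \omega,
\]
where $\gamma_{(x,y)}$ is any piecewise smooth path in $D$ from $p$ to $(x,y)$; closedness of $\omega$ on the simply connected $D$ makes the integral path-independent.

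The main conceptual obstacle is the converse direction, and specifically the reliance on simple connectedness: the construction is genuinely local and cannot in general be globalized (the standard obstruction is $u(x,y) = \tfrac{1}{2}\log(x^2+y^2)$ on $\mathbb{R}^2 \setminus \{0\}$, whose conjugate would be a global argument function). Restricting to a disk sidesteps this, which is exactly why the statement only claims $u$ is \emph{locally} the real part of a holomorphic function.
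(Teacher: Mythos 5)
Your proof is correct. Note, however, that the paper offers no argument of its own here: its ``proof'' is a bare citation to Lang, Chapter 8, so there is nothing internal to compare against. For what it is worth, Lang's own treatment of the converse proceeds by a slightly different (equivalent) device: one checks directly that $g := u_x - i u_y$ satisfies the Cauchy--Riemann equations precisely because $u$ is harmonic (and because mixed partials commute), so $g$ is holomorphic; one then takes a local primitive $F$ of $g$ on a disk and verifies that $\Re F = u$ up to an additive constant. Your construction --- integrating the closed $1$-form $\omega = -u_y\,dx + u_x\,dy$ to produce the harmonic conjugate $v$ --- is the dual formulation of the same idea: closedness of $\omega$ is exactly harmonicity, and exactness comes from simple connectedness of the disk, the same two ingredients Lang uses. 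Your approach has the small advantage of producing $v$ explicitly as a path integral; Lang's has the advantage of never leaving the holomorphic category. One point you should make explicit: to conclude that $u + iv$ is holomorphic from the Cauchy--Riemann equations you need continuity of the first partials (the equations alone do not suffice), but this is immediate in your setup since $v_x = -u_y$ and $v_y = u_x$ are continuous because $u$ is $C^2$ by the definition of harmonicity adopted in Section 2. With that remark added, your sketch is a complete and self-contained proof, which is strictly more than the paper provides.
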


\begin{proof}
    \cite[Chapter 8]{lang1985complex}
\end{proof}

\section{Delimitation of Zero-Free Regions for Holomorphic Functions}

In this section, several results will be demonstrated that enable the delimitation of a zero-free region for holomorphic functions on a set \(K\), studying the behavior of their imaginary part on the boundary of \(K\).\vspace{0.5cm}

\begin{lemma}
    Let $F: U \to \mathbb{C}$ be a holomorphic function. If $v = \text{Im}(F)$ does not change sign on the boundary of a compact set $K \subset U$, then $v$ has no zeros in the interior of $K$. As a consequence, the function $F$ also has no zeros in the interior of $K$.
\end{lemma}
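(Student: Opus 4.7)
The plan is to reduce the problem to a direct application of the Maximum Principle (Theorem 2.1), exploiting the harmonicity of $v = \mathrm{Im}(F)$. By Theorem 2.2, $v$ is harmonic on $U$, and after replacing $F$ by $-F$ if necessary, I may assume without loss of generality that $v \geq 0$ on $\partial K$.

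First I would apply Theorem 2.1 to the harmonic function $-v$ on $K$: the maximum of $-v$, and hence the minimum of $v$, is attained on $\partial K$. Since $\partial K$ is compact and $v \geq 0$ there, it follows that $v(z) \geq \min_{\partial K} v \geq 0$ for every $z \in K$, so $v$ is non-negative throughout $K$.

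To rule out interior zeros, I would argue by contradiction: suppose $v(z_0) = 0$ at some $z_0$ in the interior of $K$. Then $z_0$ is a global minimum of $v$ attained at an interior point, and the Maximum Principle applied to $-v$ on the connected component of the interior of $K$ containing $z_0$ forces $v \equiv 0$ on that component. The Cauchy--Riemann equations then force $F$ to be locally constant on that open set, and the identity principle (on a connected $U$) would make $F$ a real constant on all of $U$, a degenerate situation excluded by any nontrivial sign hypothesis on $v|_{\partial K}$. The assertion about $F$ follows immediately: if $F(z) = 0$ for some interior $z$, then $v(z) = \mathrm{Im}(F(z)) = 0$, contradicting the first conclusion.

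The main technical subtlety lies in this degenerate case. Under a strict reading of the hypothesis (i.e.\ $v > 0$ throughout $\partial K$), the compactness of $\partial K$ gives $\min_{\partial K} v > 0$ and the argument collapses to $v > 0$ everywhere on $K$; under the weak reading ($v \geq 0$), one genuinely needs the identity principle for harmonic functions to rule out $v \equiv 0$ on a component and thereby extract the desired contradiction.
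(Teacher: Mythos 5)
Your proof is correct and follows essentially the same route as the paper's: reduce to $v \geq 0$ on $\partial K$, invoke the maximum principle for the harmonic function $v = \mathrm{Im}(F)$ to get $v \geq 0$ on all of $K$, and note that an interior zero would then be an interior minimum, which the maximum principle forbids. You are in fact more careful than the paper, which silently passes over the degenerate case $v \equiv 0$ (e.g.\ $F$ a real constant, where the stated conclusion can genuinely fail) that your final paragraph correctly isolates as the only real obstruction under the weak reading of ``does not change sign.''
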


\begin{proof}
    If $v$ does not change sign on the boundary of $K$, then $v$ cannot have a zero in the interior of $K$, as in such a case, $v$ would have a local minimum or maximum. In fact, if $v$ does not change sign on the boundary of $K$, we must have one of the two conditions:

    \begin{equation*}
        v(\partial K) \geq 0
    \end{equation*}
    or
    \begin{equation*}
        v(\partial K) \leq 0
    \end{equation*}

    In either case, an interior point of $K$ where $v=0$ would be a local extremum, i.e., a minimum in the first case and a maximum in the second case. Since such points cannot exist, as stated in Theorem (2.1), it follows that $v$ is zero-free in the interior of $K$.
\end{proof}

Now, we define the set \(K[a, b] = \{s \in \mathbb{C}\ ; a \leq \Re(s) \leq b, \Im(s) \geq 0\}\), where \(a < b\). We denote \(K_n[a, b] = \{s \in \mathbb{C}\ ; a \leq \Re(s) \leq b, n > \Im(s) \geq 0\}\). It is easy to observe that \(K_n[a, b] \subset K[a, b]\), and \(\bigcup_{n \in \mathbb{N}} K_n[a, b] = K[a, b]\).

Stating that a function \(F(s)\) is zero on the boundary of \(K[a, b]\) amounts to saying:
\vspace{0.3cm}
\begin{equation}
\begin{aligned}
    \Im(F(a+it)) &= 0 \\
    \Im(F(b+it)) &= 0 \\
    \Im(F(x)) &= 0 \quad \text{, for every } x \text{ in } [a, b] \\
    \Im(F(x+i\infty)) &= 0 \quad \text{, for every } x \text{ in } [a, b]
\end{aligned}
\end{equation}
where we denote \(F(x+i\infty)=\lim_{t\to \infty} F(x+it)\). Similarly, we say that the function \(F(s)\) is non-negative (non-positive) on the boundary of \(K[a, b]\) if we have:
\vspace{0.3cm}
\begin{equation}
\begin{aligned}
    \Im(F(a+it)) &\geq 0 \text{ , } (\leq 0) \\
    \Im(F(b+it)) &\geq 0 \text{ , } (\leq 0) \\
    \Im(F(x)) &\geq 0 \text{ , } (\leq 0) \quad \text{, for every } x \text{ in } [a, b] \\
    \Im(F(x+i\infty)) &\geq 0 \text{ , } (\leq 0) \quad \text{, for every } x \text{ in } [a, b]
\end{aligned}
\end{equation}\vspace{0.2cm}

A useful way to generalize Lemma (3.1) for non-compact sets \(K[a, b]\) is given by Theorem (3.1).\vspace{0.5cm}

\begin{theorem}
    Let $F: K[a, b] \to \mathbb{C}$ be a holomorphic function. If $\text{Im}(F)$ is zero on the boundary of $K[a, b]$, and for every $s = \sigma + ti$ in $K[a, b]$, $| F(\sigma + ti)| < \frac{C}{t}$ for some constant $C$, then $F$ has no zeros in $K[a, b]$.
\end{theorem}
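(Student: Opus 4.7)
The plan is by contradiction, driven by the maximum principle (Theorem 2.1). Suppose $F$ has a zero at some $s_0 \in K[a,b]$ of finite order $k \ge 1$, so that $F(s) = (s-s_0)^k g(s)$ with $g$ holomorphic near $s_0$ and $g(s_0) \ne 0$. I will show the hypotheses---$\text{Im}(F) = 0$ on $\partial K[a,b]$ together with $|F(\sigma+ti)| < C/t$---force $v = \text{Im}(F)$ to vanish identically on the interior of $K[a,b]$, and that this cannot coexist with the local behavior of $v$ near a finite-order zero $s_0$.

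The max-principle step runs along the compact exhaustion $K_n = K_n[a,b]$. The boundary $\partial K_n$ splits into three segments of $\partial K[a,b]$, on which $v = 0$ by hypothesis, and a horizontal top edge at height $n$, on which $|v| \le |F| < C/n$. Since $v$ is harmonic in the interior of $K_n$ (Theorem 2.2), applying Theorem 2.1 to both $v$ and $-v$ yields $|v(s)| \le C/n$ for every $s \in K_n$. Any $s$ in the interior of $K[a,b]$ lies in $K_n$ for all $n > \text{Im}(s)$, so letting $n \to \infty$ forces $v(s) = 0$; hence $v \equiv 0$ throughout the interior of $K[a,b]$.

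For the local contradiction, write $s - s_0 = r e^{i\theta}$ and $g(s_0) = |g(s_0)| e^{i\phi}$. A direct expansion gives $v(s) = |g(s_0)|\, r^k \sin(k\theta + \phi) + O(r^{k+1})$ for small $r$. At an interior point $s_0$, $\theta$ ranges over all of $[0, 2\pi)$; at a point of $\partial K[a,b]$, $\theta$ still ranges over an open arc of directions pointing into the interior (a half-disk along an edge, a quarter-disk at a corner). On any such arc the factor $\sin(k\theta+\phi)$ is not identically zero, so $v$ takes nonzero values at interior points of $K[a,b]$ arbitrarily close to $s_0$---contradicting $v \equiv 0$ on the interior.

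The principal obstacle is the single function that slips between these two steps: $F \equiv 0$, which satisfies both hypotheses and for which the local expansion used in the contradiction is unavailable (there is no finite-order zero to speak of). The argument therefore rules out zeros for every $F$ satisfying the hypotheses \emph{except} the identically zero function, so the stated conclusion ``$F$ has no zeros'' requires the standard convention that ``zero of $F$'' refers to an isolated vanishing of a non-identically-zero $F$. If the conclusion is instead to hold literally for every such $F$, the natural amendment is to weaken the boundary hypothesis to the sign-preserving form of Lemma 3.1 ($\text{Im}(F) \ge 0$ or $\le 0$ on $\partial K[a,b]$); then the same min-principle estimate gives $\text{Im}(F) \ge 0$ throughout $K[a,b]$, and Lemma 3.1 applied on each $K_n$ yields a zero-free interior whose union exhausts $K[a,b]$.
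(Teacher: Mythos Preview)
Your argument is correct and its endgame differs from the paper's. Both start the same way---exhaust $K[a,b]$ by compact rectangles and use the bound $|F|<C/t$ on the top edge together with the maximum principle---but where you pass directly to $|v|\le C/n$ on $K_n$ and hence $v\equiv 0$ in the interior, the paper instead perturbs: since $v+\epsilon>0$ on $\partial K_{2C/\epsilon}[a,b]$, Lemma~3.1 makes $F+i\epsilon$ zero-free there, and an argument-principle (Hurwitz-type) limit finishes the job---an isolated zero of $F$ would give $\frac{1}{2\pi i}\oint_{\partial D[s_0,\delta]}F'/F\ge 1$, while $\frac{1}{2\pi i}\oint F'/(F+i\epsilon)=0$ for every small $\epsilon>0$. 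Your route avoids contour integrals and surfaces the $F\equiv 0$ obstruction that the paper leaves implicit in its ``isolated zero'' setup. You can in fact push your own computation one step further: the max-principle estimate yielding $v\equiv 0$ never used the assumed zero, so it holds unconditionally; Cauchy--Riemann then makes $F$ a real constant on the interior, and $|F|<C/t\to 0$ forces that constant to be zero. Thus the hypotheses of Theorem~3.1 are met \emph{only} by the identically zero function---your edge case is the entire content---and your suggested relaxation to a one-sided sign condition on $\operatorname{Im}(F)$ is exactly what gives Corollaries~3.2--3.3 nontrivial scope (and those, not Theorem~3.1 itself, are what Section~4 actually applies).
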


\begin{proof}
    Let's define $v_\epsilon=v+\epsilon$. As by hypothesis $\|F(\sigma+ti)\|<\frac{C}{t}$ for some positive constant $C$, we can conclude that $v_\epsilon$ is positive on the boundary of the set $K_\frac{2C}{\epsilon}[a, b]$. Therefore, by Lemma (3.1), $v_\epsilon$ has no zeros in the interior of $K_\frac{2C}{\epsilon}[a, b]$, for every $\epsilon>0$.

    Now, let's suppose, for the sake of contradiction, that there exists an $s = \sigma + ti$ in the interior of $K[a, b]$, such that $F(\sigma + ti) = 0$. In this case, there exists a disk centered at $s$ with radius $\delta > 0$ such that $D[s, \delta] \subset K[a, b]$, and $s$ is the only zero of $F(s)$ in $D[s, \delta]$. Denoting by $\partial D[s, \delta]$ the path represented by the boundary of $D[s, \delta]$, we have:
    
    \begin{equation*}
        \frac{1}{2\pi i}\oint_{\partial D[s, \delta]} \frac{F'(s)}{F(s)} \,ds=1
    \end{equation*}

    However, from the previous argument, we see that $v_\epsilon$ is zero-free in $K_{\frac{2C}{\epsilon}}[a, b]$, from which it follows that $F(s)+i\epsilon$ is zero-free in the same set. Choosing $0 < \epsilon < \frac{2C}{t+\delta}$, we have $D[s, \delta] \subset K_{\frac{2C}{\epsilon}}[a, b]$, and thus:

    \begin{equation*}
        \frac{1}{2\pi i}\oint_{\partial D[s, \delta]} \frac{F'(s)}{F(s)+i\epsilon} \,ds=0
    \end{equation*}
    \newline
    for every $\epsilon>0$, leading to a contradiction. Therefore, $F$ has no zeros in $K[a, b]$, as desired.
\end{proof}

An immediate corollary is:\vspace{0.5cm}

\begin{corollary}
    Let $F: K[a, \infty) \to \mathbb{C}$ be a holomorphic function. If $\text{Im}(F)$ is zero on the boundary of $K[a, \infty)$, and for every $s = \sigma + ti$ in $K[a, \infty)$, $| F(\sigma + ti)| < \frac{C}{t + |\sigma|}$ for some positive constant $C$, then $F$ has no zeros in $K[a, \infty)$.
\end{corollary}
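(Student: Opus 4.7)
The plan is to adapt the truncation-and-perturbation strategy from Theorem~3.1 to the unbounded set $K[a,\infty)$. The only new difficulty is that this set extends to infinity on the right as well as upward, so any truncation to a compact rectangle introduces an artificial vertical segment $\Re(s)=b$ on which $\operatorname{Im}(F)$ is not assumed to vanish. The stronger decay bound $|F(\sigma+it)|<C/(t+|\sigma|)$ has been calibrated precisely to control $F$ on such a segment, since it decays in $\sigma$ as well as in $t$.

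First, I would argue by contradiction: assume $F(s_0)=0$ for some interior point $s_0=\sigma_0+it_0$, and pick a closed disk $D[s_0,\delta]\subset K[a,\infty)$ on which $s_0$ is the only zero, so that
\[
\frac{1}{2\pi i}\oint_{\partial D[s_0,\delta]}\frac{F'(s)}{F(s)}\,ds = 1 .
\]
Next, for $\epsilon>0$, set $v_\epsilon=v+\epsilon$, where $v=\operatorname{Im}(F)$. Since $|v|\leq|F|<C/(t+|\sigma|)$, the function $v_\epsilon$ is strictly positive whenever $t+|\sigma|>C/\epsilon$.

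I would then choose $b$ and $n$ both exceeding $C/\epsilon$, with $b>\sigma_0+\delta$ and $n>t_0+\delta$, so that $D[s_0,\delta]$ sits inside the compact rectangle $K_n[a,b]$. The key verification is that $v_\epsilon>0$ on each piece of $\partial K_n[a,b]$: on $\Re(s)=a$ and on $[a,b]\subset\mathbb{R}$, the hypothesis gives $v=0$, so $v_\epsilon=\epsilon>0$; on $\Im(s)=n$, the bound gives $|v|<C/n<\epsilon$; and on the artificial segment $\Re(s)=b$, the bound gives $|v|<C/b<\epsilon$. Applying Lemma~3.1 to $v_\epsilon$ on $K_n[a,b]$ shows that $F(s)+i\epsilon$ has no zeros in $D[s_0,\delta]$, whence
\[
\frac{1}{2\pi i}\oint_{\partial D[s_0,\delta]}\frac{F'(s)}{F(s)+i\epsilon}\,ds = 0
\]
for every such $\epsilon$. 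Since $F$ is bounded away from $0$ on the compact circle $\partial D[s_0,\delta]$, dominated convergence as $\epsilon\to 0$ turns this integral into the previous one, giving $0=1$, a contradiction.

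The step I expect to require the most care is the estimate on the right-hand truncation: one must check that the $|\sigma|$ contribution in the denominator of the hypothesis is genuinely strong enough to make $|F(b+it)|<\epsilon$ uniformly in $t\geq 0$ once $b>C/\epsilon$. This is exactly the point at which the corollary's hypothesis is stronger than that of Theorem~3.1, and it is what permits the extension from $K[a,b]$ to $K[a,\infty)$.
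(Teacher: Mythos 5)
Your proof is correct and takes essentially the same route as the paper, whose own proof of this corollary is simply the phrase ``immediate consequence of Theorem (3.1)'': what you have written is exactly the unwinding of that claim, re-running the truncation--perturbation--argument-principle machinery of Theorem 3.1 on rectangles $K_n[a,b]$ truncated on the right as well as above. Your key observation --- that Theorem 3.1 cannot be cited verbatim because the artificial edge $\Re(s)=b$ carries no vanishing hypothesis on $\operatorname{Im}(F)$, and that the stronger decay $|F(\sigma+it)|<C/(t+|\sigma|)$ is precisely what makes $v_\epsilon>0$ there once $b>C/\epsilon$ --- is the one genuine subtlety hidden behind the paper's ``immediate,'' and you handle it correctly.
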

\begin{proof}
    Immediate consequence of Theorem (3.1).
\end{proof}

An obvious generalization for the case where $v$ is non-negative (or non-positive) on the boundary of $K[a, \infty)$ is given by Corollaries (3.2) and (3.3).\vspace{0.5cm}

\begin{corollary}
    Let $F: K[a, \infty) \to \mathbb{C}$ be a holomorphic function. If $\text{Im}(F)$ is non-positive (or non-negative) on the boundary of $K[a, \infty)$, and for every $s = \sigma + ti$ in $K[a, \infty)$, $| F(\sigma + ti)| < \frac{C}{t + |\sigma|}$ for some positive constant $C$, then $F$ has no zeros in $K[a, \infty)$.
\end{corollary}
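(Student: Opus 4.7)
The plan is to adapt the perturbation argument from the proof of Theorem 3.1 (combined with the growth bound used in Corollary 3.1), the only structural change being that the boundary hypothesis is now a sign condition on $\text{Im}(F)$ rather than a vanishing one. Without loss of generality I would treat the non-negative case; the non-positive case then follows by applying the same argument to $-F$ (equivalently, by using the perturbation $v_\epsilon = v - \epsilon$).

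First I would introduce the perturbation $v_\epsilon = \text{Im}(F) + \epsilon$ for $\epsilon > 0$. On the three boundary pieces of $K[a, \infty)$ prescribed by the hypothesis---the vertical line $\Re(s) = a$, the ray $\Im(s) = 0$ with $\Re(s) \geq a$, and the limit $\Im(s) \to \infty$---the assumption $\text{Im}(F) \geq 0$ upgrades to the strict inequality $v_\epsilon \geq \epsilon > 0$. Next, I would truncate to the compact rectangle $K_{n,m}[a, \infty) := \{s : a \leq \Re(s) \leq m,\ 0 \leq \Im(s) \leq n\}$. The growth bound $|F(\sigma + ti)| < C/(t + |\sigma|)$ controls $|\text{Im}(F)|$ on the newly introduced top edge ($\Im(s) = n$) and right edge ($\Re(s) = m$), so selecting $n, m > 2C/\epsilon$ forces $v_\epsilon > \epsilon/2 > 0$ on all four sides. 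Lemma 3.1 then yields $v_\epsilon \neq 0$ in the interior of $K_{n,m}[a, \infty)$, which is exactly the statement that $F + i\epsilon$ has no zero there.

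For the contradiction step I would suppose $F(s_0) = 0$ for some $s_0 = \sigma_0 + it_0$ in the interior of $K[a, \infty)$ and fix $\delta > 0$ so that the closed disk $D[s_0, \delta] \subset K[a, \infty)$ isolates this zero of $F$. The argument principle then produces
\begin{equation*}
\frac{1}{2\pi i}\oint_{\partial D[s_0,\delta]}\frac{F'(s)}{F(s)}\,ds \geq 1, \qquad \frac{1}{2\pi i}\oint_{\partial D[s_0,\delta]}\frac{F'(s)}{F(s)+i\epsilon}\,ds = 0
\end{equation*}
for every $\epsilon > 0$ small enough that $D[s_0, \delta]$ lies inside some $K_{n,m}[a, \infty)$ with $n, m > 2C/\epsilon$. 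Letting $\epsilon \to 0$ and using that $|F|$ is bounded below on the compact curve $\partial D[s_0, \delta]$, the second integrand converges uniformly to the first, producing the required contradiction.

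I expect the main obstacle to be purely bookkeeping: verifying that the perturbation really does become strictly of one sign on all four edges of the truncated rectangle once $n, m$ are chosen of order $1/\epsilon$, and that the disk $D[s_0, \delta]$ can be engulfed by such a rectangle for all sufficiently small $\epsilon$ simultaneously. Once these elementary estimates are in place, the argument is a direct transcription of the proof of Theorem 3.1, with the sign hypothesis handled "for free" by the perturbation.
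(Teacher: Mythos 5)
Your proposal is correct, but it is genuinely more substantive than what the paper does here: the paper's entire proof of this corollary is the sentence ``Immediate consequence of Theorem (3.1),'' whereas you supply the actual argument. That gap-filling is not gratuitous, because the corollary is not literally an instance of Theorem 3.1: that theorem assumes a horizontally bounded strip $K[a,b]$ and $\Im(F)$ identically zero on its boundary, while here the region is unbounded in the $\sigma$-direction and the boundary condition is only a sign condition. Your two modifications are exactly the ones required: truncating in both the $t$- and $\sigma$-directions to a compact rectangle, where the stronger decay $|F(\sigma+ti)|<C/(t+|\sigma|)$ is precisely what controls the new right-hand edge (this is why the corollary's hypothesis is stronger than the theorem's $C/t$), and observing that the perturbation $v_\epsilon=\Im(F)+\epsilon$ needs only one-sided positivity, not vanishing, on the original boundary pieces. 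Your closing step is also more careful than the paper's own proof of Theorem 3.1, which merely asserts the contradiction between $\frac{1}{2\pi i}\oint F'/F\,ds\geq 1$ and $\frac{1}{2\pi i}\oint F'/(F+i\epsilon)\,ds=0$; you justify the limit $\epsilon\to 0$ by noting that $|F|$ is bounded below on the compact circle $\partial D[s_0,\delta]$, so the integrands converge uniformly. One small observation: on the top and right edges of your rectangle the decay bound alone already forces $v_\epsilon>\epsilon/2$, so the sign hypothesis ``at $t=\infty$'' is never actually invoked --- it is subsumed by the decay hypothesis, as your estimate implicitly shows.
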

\begin{proof}
    Immediate consequence of Theorem (3.1).
\end{proof}

\begin{corollary}
    Let $F: K[a, \infty) \to \mathbb{C}$ be a holomorphic function. If $\text{Im}(F)$ is non-positive (or non-negative) on the boundary of $K[a, \infty)$, and for every $s = \sigma + ti$ in $K[a, \infty)$, $| F(\sigma + ti) - f(\sigma + ti)| < Cg(t, \sigma)$ for some positive constant $C$, $f(\sigma + ti) \leq 0$ (or $f(\sigma + ti) \geq 0$), and $g(\sigma + ti)$ converges uniformly to zero as $t, \sigma \to \infty$, then $F$ has no zeros in $K[a, \infty)$.
\end{corollary}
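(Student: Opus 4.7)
The plan is to combine a truncation of $K[a,\infty)$ into a compact rectangle with a perturbation of the target value from $0$ to $i\epsilon$, so that Lemma (3.1) can be invoked on the truncated region. I will treat the non-positive case; the non-negative case follows by applying this case to $-F$ (and $-f$). For $\epsilon > 0$, the uniform convergence of $g$ to zero allows me to choose $M = M(\epsilon)$ so large that $Cg(t,\sigma) < \epsilon$ whenever $t \geq M$ or $\sigma \geq M$, which fixes the compact truncation $K_M[a,M]$.

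Next I would bound $\text{Im}(F)$ on $\partial K_M[a,M]$. On the pieces $\sigma = a$ and $t = 0$, which also lie in $\partial K[a,\infty)$, the hypothesis directly gives $\text{Im}(F) \leq 0$. On the new pieces $\sigma = M$ and $t = M$, the approximation yields $\text{Im}(F) \leq \text{Im}(f) + |F - f| \leq 0 + Cg < \epsilon$. Hence $\text{Im}(F - i\epsilon) \leq 0$ on all of $\partial K_M[a,M]$, and Lemma (3.1), applied to the holomorphic function $F - i\epsilon$, forces $F(s) \neq i\epsilon$ in the interior of $K_M[a,M]$. Assume now, for contradiction, that $F(s_0) = 0$ at some interior point $s_0 \in K[a,\infty)$, with $F$ not identically zero, and fix $\delta > 0$ so that $\overline{D}(s_0,\delta) \subset K[a,\infty)$. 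The open mapping theorem then produces, for every sufficiently small $\epsilon > 0$, a point $s_1 \in D(s_0,\delta)$ with $F(s_1) = i\epsilon$. Choosing $\epsilon$ small enough that both this conclusion applies and $M(\epsilon)$ exceeds $\text{Re}(s_0) + \delta$ and $\text{Im}(s_0) + \delta$, the point $s_1$ lies in the interior of $K_{M(\epsilon)}[a,M(\epsilon)]$, contradicting the previous step.

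The main obstacle is the coupling of the two limits in the final step: the perturbation must shrink ($\epsilon \to 0$) so that the open mapping theorem supplies a preimage of $i\epsilon$ inside a disk of fixed radius around $s_0$, while simultaneously the truncation must grow ($M \to \infty$) to contain that disk. These requirements are compatible precisely because $M(\epsilon) \to \infty$ as $\epsilon \to 0^{+}$, which is the content of the uniform decay hypothesis on $g$. The delicate check is that $\text{Im}(F) < \epsilon$ really does hold uniformly on the new edges $\sigma = M$ and $t = M$, and this is exactly what the approximation bound $|F - f| < Cg$ combined with the sign of $\text{Im}(f)$ is designed to guarantee.
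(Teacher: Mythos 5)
Your proof is correct, and it is actually more complete than the paper's, which disposes of this corollary with the single line ``Immediate consequence of Theorem (3.1).'' Strictly speaking, the corollary is not a literal instance of that theorem: there the boundary data is $\Im(F)=0$ exactly and the decay hypothesis is $|F|<C/t$, while here $\Im(F)$ is only required to keep one sign and $F$ merely approximates a one-signed function $f$. What the paper really means is that the proof of Theorem (3.1) adapts, and your argument is precisely that adaptation: truncate $K[a,\infty)$ to a compact rectangle using the uniform decay of $g$ (note that your reading of the decay hypothesis --- $Cg<\epsilon$ whenever $t\geq M$ \emph{or} $\sigma\geq M$ --- is the one the argument genuinely needs, since both new edges of the rectangle must be controlled), perturb by $i\epsilon$ so that Lemma (3.1) applies on the truncated boundary, and derive a contradiction at a hypothetical zero. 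The one genuine divergence is the endgame: the paper's Theorem (3.1) obtains the contradiction from the argument principle, comparing $\frac{1}{2\pi i}\oint_{\partial D[s,\delta]} \frac{F'(s)}{F(s)}\,ds=1$ with $\frac{1}{2\pi i}\oint_{\partial D[s,\delta]} \frac{F'(s)}{F(s)+i\epsilon}\,ds=0$ for all small $\epsilon$, whereas you use the open mapping theorem to produce a point with $F=i\epsilon$ inside the truncated rectangle, contradicting Lemma (3.1) directly. The two devices are interchangeable here; yours avoids the limit $\epsilon\to 0$ of contour integrals (and the continuity-in-$\epsilon$ step the paper leaves unstated), at the cost of having to assume $F\not\equiv 0$ --- an exclusion the paper also needs but never mentions, since the identically zero function satisfies every hypothesis. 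One simplification you could make: the ``coupling'' of $\epsilon$ and $M$ that worries you in the last paragraph is a non-issue, because the bound $Cg<\epsilon$ for $t\geq M$ or $\sigma\geq M$ persists when $M$ is enlarged; so you may fix $s_0$, $\delta$, and $\epsilon$ first and then take $M$ as large as you please, with no appeal to $M(\epsilon)\to\infty$ required.
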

\begin{proof}
    Immediate consequence of Theorem (3.1).
\end{proof}
All the theorems remain valid when swapping the imaginary part of $F(s)$ for the real part; it suffices to consider $F(s) \to iF(s)$, resulting in a permutation of the real and imaginary parts.

Certainly, not every function of interest possesses the properties required by the theorems and corollaries in this section. However, it is possible to modify the original function by multiplying it by functions whose distribution of zeros is known, obtaining a final function whose hypotheses used in the theorems and corollaries are satisfied. Such an artifice is used in Section 4 to demonstrate that the function \(\Gamma(s)\) is zero-free in \(K[\frac{1}{2},1]\).

\section{Application}
\subsection{Gamma function}
In this section, I will use Theorem (3.1) to provide a new proof that the Gamma function has no zeros in the interior of the set \(K[\frac{1}{2},1]\). To do this, we will use the following facts:

\begin{enumerate}
    \item $\|\Gamma(\sigma+ti)\| < \frac{C}{t}$ if $\sigma+ti \in K[\frac{1}{2},1].$
    \item $\Gamma(\sigma) \in \mathbb{R}$ if $\sigma \in \mathbb{R}.$
    \item $\Gamma(s+1) = s\Gamma(s).$
\end{enumerate}

\begin{proof}
    Let $F(s)=\Gamma(s)\Gamma(1-s)s(1-s)$; proving that $\Gamma(s)$ is zero-free in $K[\frac{1}{2},1]$ is equivalent to proving that $F(s)$ is zero-free in the same set.

    It is observed that the imaginary part of $F(s)$ is non-negative on the boundary of $K[\frac{1}{2},1]$. In fact:

    \begin{equation}
        \Im(F(\frac{1}{2}+ti))=0
    \end{equation}

    \begin{equation}
        \Im(F(1+ti)=\Im\{-it(1+it)\Gamma(1+ti)\Gamma(-ti)\}.
    \end{equation}\vspace{0.3 cm}

    Since $\Gamma(-ti)=\frac{\Gamma(1-ti)}{-ti}$, we have:

    \begin{equation}
        \Im(F(1+ti)=t\|\Gamma(1+ti)\|^{2}
    \end{equation}

    Also,

    \begin{equation}
        \Im(F(\sigma)=0
    \end{equation}

    \begin{equation}
        \Im(F(\sigma+\infty i)=0
    \end{equation}\vspace{0.3 cm}

    Thus, we can conclude that $\Im(F(s))$ is non-negative on the boundary of $K[\frac{1}{2},1]$. By Corollary (3.2), I infer that $F(s)$ has no zeros in the interior of $K[\frac{1}{2},1]$, as desired to demonstrate.
\end{proof}

Note that in this proof, the only non-generic property used is $\Gamma(s+1)=s\Gamma(s)$, demonstrating the usefulness of Theorem (3.1) in delimiting zero-free regions.

\subsection{Zeta function}

The distribution of zeros of the Riemann zeta function is one of the great mysteries of modern mathematics. Its significance is related to the estimation of the error in the prime number theorem and a set of equivalent results (see \cite{conrey2003riemann},\cite{choie2007robin},\cite{borwein2008riemann},\cite{RevModPhys.83.307}).

One of the fundamental issues in the theory of the zeta function is the delimitation of a zero-free region in the critical strip. The first promising result in this direction was established by Vallée Poussin (see \cite{voronin}), but this and recent findings have been unable to demarcate a zero-free region of the form $\Re(s)>1-\epsilon$ (see \cite{yang2023explicit}). 

In this section, we will delimit a zero-free region for the Riemann zeta function with the help of Theorem (3.1).

First, let's review some classical results about the Riemann zeta function used later in this section.
\vspace{0.5 cm}

\begin{theorem}
The Riemann zeta function, $\zeta(s)$, satisfies the following functional equation:
\[
\zeta(s) = 2^s \pi^{s-1} \sin\left(\frac{\pi s}{2}\right) \Gamma(1-s) \zeta(1-s)
\]
for all \(s\) different from 1.
\end{theorem}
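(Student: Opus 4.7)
The plan is to prove the functional equation via Riemann's classical theta-function argument, which is self-contained and meshes well with the $\Gamma$-function facts already collected in this paper. The goal is first to establish the symmetric completed form $\xi(s) = \xi(1-s)$, where $\xi(s) := \pi^{-s/2}\Gamma(s/2)\zeta(s)$, and then to convert it to the stated asymmetric form using the reflection and Legendre duplication identities for $\Gamma$.

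The first step is to introduce the Jacobi theta function $\theta(t) = \sum_{n=-\infty}^{\infty} e^{-\pi n^2 t}$ for $t>0$ and to establish the modular transformation $\theta(1/t) = \sqrt{t}\,\theta(t)$ by applying Poisson summation to the Gaussian $f(x) = e^{-\pi x^2 t}$, whose Fourier transform is $\widehat{f}(\xi) = t^{-1/2} e^{-\pi \xi^2 / t}$. Setting $\psi(t) := \sum_{n \geq 1} e^{-\pi n^2 t} = (\theta(t)-1)/2$, this transformation reads $\psi(1/t) = -\tfrac{1}{2} + \tfrac{\sqrt{t}}{2} + \sqrt{t}\,\psi(t)$.

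The second step is the Mellin representation. For $\Re(s) > 1$, the substitution $t \mapsto \pi n^2 t$ in the integral for $\Gamma(s/2)$ followed by summation over $n$ (justified by absolute convergence) yields $\xi(s) = \int_0^\infty t^{s/2 - 1}\psi(t)\,dt$. Splitting the integral at $t=1$, applying the theta transformation to the part over $(0,1)$ after the change of variable $t \mapsto 1/t$, and simplifying the elementary contributions produces the manifestly symmetric identity
\[
\xi(s) = -\frac{1}{s(1-s)} + \int_1^\infty \bigl(t^{s/2-1} + t^{(1-s)/2-1}\bigr)\psi(t)\,dt.
\]
The right-hand side is invariant under $s \mapsto 1-s$, and since $\psi$ decays exponentially at infinity, it provides a meromorphic continuation of $\xi$ to all of $\mathbb{C}$; hence $\xi(s) = \xi(1-s)$ for $s \neq 0,1$.

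The final step is to rewrite this symmetry in the stated asymmetric form. Solving $\pi^{-s/2}\Gamma(s/2)\zeta(s) = \pi^{-(1-s)/2}\Gamma((1-s)/2)\zeta(1-s)$ for $\zeta(s)$ gives a factor $\pi^{s-1/2}\,\Gamma((1-s)/2)/\Gamma(s/2)$. Using the reflection formula $\Gamma(s/2)\Gamma(1-s/2) = \pi/\sin(\pi s/2)$ together with the Legendre duplication formula in the form $\Gamma((1-s)/2)\Gamma(1-s/2) = 2^s\sqrt{\pi}\,\Gamma(1-s)$, the ratio simplifies to $2^s \pi^{s-1} \sin(\pi s/2)\Gamma(1-s)$, which is precisely the stated factor. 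The main obstacle, and the only genuinely non-algebraic step, is the theta transformation: Poisson summation must be justified by verifying the Schwartz-type decay of the Gaussian and its Fourier transform, but this is standard, and everything else reduces to bookkeeping with $\Gamma$-identities.
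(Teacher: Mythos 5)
Your proof is correct, but it cannot be compared with the paper's argument in the usual sense, because the paper offers no argument at all: its ``proof'' of this theorem is a bare citation to Titchmarsh, as is its proof of the symmetric form $\xi(s)=\xi(1-s)$ stated immediately afterwards as Corollary 4.1. What you supply is the classical theta-function proof (Riemann's second method, and in fact the one presented in the cited reference), and its steps check out: Poisson summation applied to the Gaussian gives $\theta(1/t)=\sqrt{t}\,\theta(t)$; the Mellin computation for $\Re(s)>1$ gives $\xi(s)=\int_0^\infty t^{s/2-1}\psi(t)\,dt$; splitting at $t=1$ and transforming the piece over $(0,1)$ produces the elementary contribution $\frac{1}{s-1}-\frac{1}{s}=-\frac{1}{s(1-s)}$ plus a manifestly symmetric integral that converges for every $s$ because $\psi(t)=O(e^{-\pi t})$, which yields both the meromorphic continuation and $\xi(s)=\xi(1-s)$; and the final $\Gamma$-bookkeeping is right, since reflection gives $1/\Gamma(s/2)=\sin(\pi s/2)\,\Gamma(1-s/2)/\pi$ and Legendre duplication at $z=(1-s)/2$ gives $\Gamma((1-s)/2)\,\Gamma(1-s/2)=2^s\sqrt{\pi}\,\Gamma(1-s)$, so that $\pi^{s-1/2}\,\Gamma((1-s)/2)/\Gamma(s/2)=2^s\pi^{s-1}\sin(\pi s/2)\,\Gamma(1-s)$ as required. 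The trade-off is between delegation and self-containment: the paper's citation is economical for a classical result, while your argument makes the result self-contained and, as a bonus, establishes Corollary 4.1 along the way, since you prove the symmetric form first and derive the stated asymmetric form from it. One cosmetic caution: you use $\psi(t)$ for $\sum_{n\ge 1}e^{-\pi n^2 t}$, whereas the paper later uses $\psi(s)$ for $\pi^{s-\frac{1}{2}}\Gamma(\frac{1-s}{2})/\Gamma(\frac{s}{2})$ in Section 4.2, so one of the two symbols should be renamed if your proof were inserted into the text.
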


\begin{proof}
    \cite{titchmarsh1986theory}
\end{proof}
\vspace{0.5 cm}

\begin{corollary}
The function $\xi(s)=\pi^{\frac{-s}{2}}\zeta(s)\Gamma(\frac{s}{2})$ satisfies the following functional equation
\begin{equation*}
\xi(s)=\xi(1-s)
\end{equation*}
for \(s\) different from 1.
\end{corollary}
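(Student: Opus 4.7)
The plan is to substitute the functional equation of Theorem 4.1 directly into the definition of $\xi(s)$ and then reduce the resulting expression to $\xi(1-s)$ using two classical identities for the Gamma function: the reflection formula
\[
\Gamma(z)\Gamma(1-z) = \frac{\pi}{\sin(\pi z)}
\]
and the Legendre duplication formula
\[
\Gamma(z)\Gamma\!\left(z+\tfrac{1}{2}\right) = 2^{1-2z}\sqrt{\pi}\,\Gamma(2z).
\]
Both of these are standard and may be quoted without proof.

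First I would write
\[
\xi(s) = \pi^{-s/2}\,\Gamma(s/2)\,\zeta(s)
\]
and replace $\zeta(s)$ by $2^s\pi^{s-1}\sin(\pi s/2)\,\Gamma(1-s)\,\zeta(1-s)$ using Theorem 4.1. The $\pi$ powers combine into $\pi^{(s-1)/2}$, and after pulling out the target factor $\pi^{-(1-s)/2}\zeta(1-s) = \pi^{(s-1)/2}\zeta(1-s)$ the problem is reduced to verifying the Gamma-function identity
\[
2^s\,\sin\!\left(\tfrac{\pi s}{2}\right)\Gamma(1-s)\,\Gamma(s/2) = \sqrt{\pi}\,\Gamma\!\left(\tfrac{1-s}{2}\right).
\]

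Next I would eliminate $\Gamma(1-s)$ via the duplication formula with $z=(1-s)/2$, which gives
\[
\Gamma(1-s) = \frac{2^{-s}}{\sqrt{\pi}}\,\Gamma\!\left(\tfrac{1-s}{2}\right)\Gamma\!\left(1-\tfrac{s}{2}\right).
\]
Substituting this makes the factors $2^s$ and $\Gamma((1-s)/2)$ cancel, and the remaining identity to check is
\[
\Gamma(s/2)\,\Gamma(1-s/2) = \frac{\pi}{\sin(\pi s/2)},
\]
which is precisely the reflection formula applied at $z=s/2$.

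There is no real obstacle here; the only delicate point is bookkeeping of the powers of $2$ and $\pi$ and making sure the argument is valid away from the poles of the Gamma factors and from $s=1$. Since both sides of $\xi(s)=\xi(1-s)$ are meromorphic, the identity extends to all $s\neq 1$ by analytic continuation once it is established on any open set where the chain of manipulations is legitimate.
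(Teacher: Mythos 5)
Your derivation is correct; the bookkeeping checks out. Substituting Theorem 4.1 into $\xi(s)=\pi^{-s/2}\Gamma(s/2)\zeta(s)$ gives $2^s\pi^{s/2-1}\sin\left(\tfrac{\pi s}{2}\right)\Gamma\left(\tfrac{s}{2}\right)\Gamma(1-s)\zeta(1-s)$, and comparing with $\xi(1-s)=\pi^{(s-1)/2}\Gamma\left(\tfrac{1-s}{2}\right)\zeta(1-s)$ reduces the claim to precisely the identity you state, $2^s\sin\left(\tfrac{\pi s}{2}\right)\Gamma(1-s)\Gamma\left(\tfrac{s}{2}\right)=\sqrt{\pi}\,\Gamma\left(\tfrac{1-s}{2}\right)$; Legendre duplication at $z=\tfrac{1-s}{2}$ followed by reflection at $z=\tfrac{s}{2}$ then closes it, and your analytic-continuation remark legitimately disposes of the exceptional points. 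One small wording slip: the powers of $\pi$ combine to $\pi^{s/2-1}$, not $\pi^{(s-1)/2}$; the leftover $\pi^{-1/2}$ is exactly the $\sqrt{\pi}$ on the right of your reduced identity, which you wrote correctly, so no error propagates. This is, however, a genuinely different route from the paper: the paper gives no derivation at all, citing Titchmarsh both for Theorem 4.1 and for this corollary. Your argument has the merit of making the statement an actual corollary of Theorem 4.1 within the paper, with the only outside inputs being the two classical Gamma identities, whereas the paper's citation buys brevity and defers all analytic care (the poles of $\Gamma(s/2)$, the point $s=1$, and also $s=0$, which the corollary's statement arguably should exclude as well) to the standard reference.
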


\begin{proof}
    \cite{titchmarsh1986theory}
\end{proof}
\vspace{0.5 cm}

\begin{theorem}
    The zeta function has no zeros outside the critical line.
\end{theorem}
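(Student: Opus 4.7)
The plan is to mimic the strategy that produced the Gamma-function result in Section 4.1: I would construct an auxiliary function $F(s)$ built from $\zeta(s)$ together with several correcting factors, arranged so that (i) $F$ and $\zeta$ share the same zero locus inside the region of interest, (ii) $\Im F$ has a single sign (or vanishes) on the boundary of that region, and (iii) the decay bound $|F(\sigma+it)|<C/t$ required by Corollary (3.2) holds throughout. Once these three properties are verified, Corollary (3.2) produces the zero-free region directly.

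First, I would use the symmetry $\xi(s)=\xi(1-s)$ from Corollary (3.4) to reduce the problem to a single compact-like region in the upper critical strip, namely $K[\tfrac12,1]$. Zeros in $K[0,\tfrac12]$ are then handled by reflection, zeros in the lower half-plane by Schwarz reflection since $\zeta$ is real on $\mathbb{R}$, and the strips $\Re(s)\ge 1$ and $\Re(s)\le 0$ (outside the trivial zeros at negative even integers, which lie off the critical line but \emph{outside} $K[\tfrac12,1]$) are dispatched by the classical non-vanishing results cited earlier in the paper. Thus the whole statement reduces to proving that $\zeta$ has no zeros in the interior of $K[\tfrac12,1]$.

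Second, I would try $F(s)=\xi(s)\,\xi(1-s)\,q(s)$, where $q(s)$ is a polynomial (analogous to the factor $s(1-s)$ in the Gamma proof) chosen to cancel the simple pole of $\zeta$ at $s=1$ and to align the imaginary part on the boundary of $K[\tfrac12,1]$. On the critical line $\Re(s)=\tfrac12$, Schwarz reflection gives $\xi(1-s)=\overline{\xi(s)}$, so $\xi(s)\xi(1-s)=|\xi(\tfrac12+it)|^2\in\mathbb{R}$, and $\Im F$ vanishes provided $q$ is real on this line. On the segment $[\tfrac12,1]$, $\xi$ is real and again $\Im F=0$ if $q$ is real on $\mathbb{R}$. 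On the upper boundary $\Im(s)\to\infty$, the Stirling asymptotics for $\Gamma(s/2)$ inside $\xi$ force exponential decay in $t$, which simultaneously yields $\Im F(\sigma+i\infty)=0$ and the bound $C/t$ demanded by the hypothesis of Theorem (3.1), even after multiplication by the polynomial $q$.

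The main obstacle will be the remaining vertical side $\Re(s)=1$: one must choose $q$ so that $\Im F(1+it)$ has a consistent sign for all $t\ge 0$, without introducing new zeros of $F$ inside $K[\tfrac12,1]$. This amounts to analysing $\Im\bigl[\xi(1+it)\,\xi(-it)\bigr]$ by expanding $\xi(-it)$ via its $\Gamma$-factor and the functional equation, extracting the oscillating phase coming from $\Gamma(-it/2)$ and the factor $\pi^{(-1+it)/2}$, and then designing $q$ to absorb that phase into a definite sign. If a $q$ with polynomial growth (hence negligible against the exponential decay of $\xi$) accomplishes this, then $\Im F$ is non-negative (say) on the whole boundary of $K[\tfrac12,1]$, and Corollary (3.2) finishes the argument. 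Failure to find such a $q$ would mean the approach must be supplemented with an additional multiplicative factor encoding more of the symmetry of $\xi$, but the general template \textemdash{} boundary control of $\Im F$ plus Stirling decay plus Corollary (3.2) \textemdash{} would remain the same.
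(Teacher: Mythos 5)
Your reduction to $K[\tfrac12,1]$ is fine, but the proposal leaves its decisive step --- choosing a polynomial $q$ so that $\Im F$ keeps one sign on the edge $\Re(s)=1$ --- as an open contingency, and that step is not merely unfinished: it is impossible within the family $F(s)=\xi(s)\xi(1-s)q(s)$ you propose. By the functional equation $\xi(s)=\xi(1-s)$ (Corollary 4.1 of the paper), your pairing is trivial: $\xi(s)\xi(1-s)=\xi(s)^2$, so it produces no phase cancellation whatsoever. Contrast this with Section 4.1, where the pairing $\Gamma(s)\Gamma(1-s)=\pi/\sin(\pi s)$ is genuinely nontrivial precisely because $\Gamma$ does \emph{not} satisfy $\Gamma(s)=\Gamma(1-s)$; that is what collapsed the phases and gave $\Im F(1+it)=t|\Gamma(1+it)|^2\geq 0$ there. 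In your case, on $\Re(s)=1$ Stirling gives $\arg\Gamma\bigl(\tfrac{1+it}{2}\bigr)=\tfrac{t}{2}\ln\tfrac{t}{2}-\tfrac{t}{2}+O(1)$, while $\arg\zeta(1+it)=o(t\ln t)$, so $\arg\bigl[\xi(1+it)^2\bigr]$ grows without bound, on the order of $t\ln t$. Any polynomial $q$ contributes an argument that stays bounded (tending to $(\deg q)\,\pi/2$), so $\arg F(1+it)$ winds through infinitely many full turns and $\Im F(1+it)$ changes sign infinitely often, for \emph{every} choice of $q$. The hypotheses of Corollary (3.2) can never be satisfied by this family; the template fails exactly at the obstacle you flagged, and no polynomial patch can repair it.

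The paper's proof is structured specifically to dodge this wall, and it does so differently from you on every count: it works on the unbounded region $K[\tfrac12,\infty)$ (in fact $K[\tfrac12,\infty,10]$, to stay away from real-axis poles), whose boundary has \emph{no right vertical edge at all} --- only the critical line, the horizontal line $\Im(s)=10$, and infinity; it uses the \emph{real} part of $F$ (via the remark following Corollary 3.3) rather than the imaginary part; and its auxiliary function divides $\xi(s)^2$ by a $\Gamma$-built denominator, $F(s)=\frac{1}{(\frac12-s)}\,\xi(s)^{2}/\bigl\{\pi^{-\frac12}\Gamma(\tfrac{s}{2})\Gamma(\tfrac{1-s}{2})+\bigl(\Gamma(\tfrac{s}{2})\pi^{-\frac{s}{2}}+\Gamma(\tfrac{1-s}{2})\pi^{\frac{s-1}{2}}\bigr)^{2}\bigr\}$, arranged so that $F$ is purely imaginary on the critical line, with Corollary (3.3) supplying the decay requirement. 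The winding problem is thereby traded for a sign condition on the single horizontal boundary $\Im(s)=10$ --- which, you should note, the paper itself only ``verifies'' for small $\sigma$ by plotting graphs on Wolfram, so the paper's own argument is not a complete proof either (unsurprisingly, since the statement is the Riemann Hypothesis). The structural lesson to take from the comparison is that the paper eliminates the problematic edge by changing the region, whereas your plan keeps the edge and hopes a correcting factor exists; the phase-winding computation above shows it does not.
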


\begin{proof}
Let
\begin{equation}
    F(s)=\frac{1}{(\frac{1}{2}-s)}\frac{\xi(s)^{2}}{\{\pi^{-\frac{1}{2}}\Gamma(\frac{s}{2})\Gamma(\frac{1-s}{2})+\left(\Gamma(\frac{s}{2})\pi^{\frac{-s}{2}}+\Gamma(\frac{1-s}{2})\pi^{\frac{s-1}{2}}\right)^{2}\}}.
\end{equation}
\vspace{0.1 cm}

Showing that the zeta function is zero-free in the interior of $K[\frac{1}{2},\infty]$ is equivalent to showing that $F(s)$ is zero-free in the same set. By Corollary (3.3), it is equivalent to showing that the real part of $F(s)$ does not change sign on the boundary of $K[\frac{1}{2},\infty)$.

To avoid poles of $F(s)$ along the real axis, we will show that the real part of $F(s)$ does not change sign along the boundary of $K[\frac{1}{2},\infty,10]=\{s \in \mathbb{C}\ ; \frac{1}{2} \leq \Re(s) < \infty, \Im(s) \geq 10\}$.

To do this, we need to demonstrate three facts:

\begin{enumerate}
   \item $|F(s)| \to 0 $ uniformly, if $s \in K[\frac{1}{2},\infty,10]$ and $|s| \to \infty$.
    \item $\Re(F(\frac{1}{2}+ti)) \leq 0 \in \mathbb{R}$, if $t>0 .$
    \item $\Re(F(\sigma+10i)) \leq 0 $, if $\sigma>\frac{1}{2}.$
\end{enumerate}

To prove the first fact, initially, it is observed that using the expression of the function $\xi(s)$, we can write $F(s)$ as:

\begin{equation}
    F(s)=\frac{1}{(\frac{1}{2}-s)}\frac{\zeta(s)^{2}}{\{\psi(s)+(1+\psi(s))^{2}\}}
\end{equation}
where 
\begin{equation}
    \psi(s)=\pi^{s-\frac{1}{2}}\frac{\Gamma(\frac{1-s}{2})}{\Gamma(\frac{s}{2})}.
\end{equation}

Note that :

\begin{equation}
    \|\psi(\sigma+ti)\|=O(t^{\frac{1}{2}-\sigma})
\end{equation}
\newline
and as $\|\zeta(\sigma+ti)\|=O(t^{\frac{1}{4}})$, if $\sigma>\frac{1}{2}$ and $t>\pi$ , we have:

\begin{equation}
    \lim_{\|s\| \to \infty} F(s)=0.
\end{equation}
and the limit occurs uniformly.

To prove the validity of (2), we will use the initial expression of $F(s)$ in equation (4.6), observe that by Corollary (4.1) 

\begin{equation}
    \Im(\xi(\frac{1}{2}+ti))=0
\end{equation}

Hence:

\begin{equation}
    F(\frac{1}{2}+ti)=\frac{i}{t}\frac{\|\xi(\frac{1}{2}+ti)\|^{2}}{\{\pi^{-\frac{1}{2}}\left(\|\Gamma(\frac{1}{4}+\frac{ti}{2}\|\right)^{2}+\left(2 \pi^{-\frac{1}{4}}\Re(\Gamma(\frac{1}{4}+\frac{ti}{2})\cos(\frac{\pi}{4} t)\right)^{2}\}}
\end{equation}
(Where the fact $\Gamma(\sigma-ti) = \Gamma(\sigma+ti)^{*}$ was used.)
\newline
and

\begin{equation}
    \Re(F(\frac{1}{2}+ti)=0
\end{equation}
thus proving its validity.

It remains to prove the validity of condition (3), for this, we use expression (4.7) 

\begin{equation}
    F(s)=\frac{1}{(\frac{1}{2}-s)}\frac{\zeta(s)^{2}}{\{\psi(s)+(1+\psi(s))^{2}\}}
\end{equation}
where 
\begin{equation}
    \psi(s)=\pi^{s-\frac{1}{2}}\frac{\Gamma(\frac{1-s}{2})}{\Gamma(\frac{s}{2})}.
\end{equation}
For $\sigma$ "sufficiently large," we have:

\begin{equation}
F(\sigma+10i)\approx \frac{1}{\frac{1}{2}-\sigma-ti}
\end{equation}
\newline
Certainly, as $\sigma \to \infty$, the remaining terms in (4.14) converge uniformly to $1$. Under this condition, requirement (3) is fully satisfied.

The examination of the real part of $F(\sigma+10i)$ for "small" $\sigma$ is performed by plotting expression (4.14) on Wolfram, and with that, it is verified that condition (3) holds true (see Figure 1,2).
\begin{figure}[h]
  \centering
  \includegraphics[width=1\textwidth]{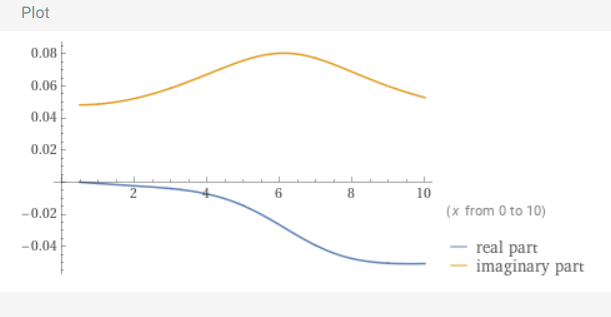}
  \caption{Graph of the real and imaginary parts of $F(\sigma+10i)$.}
  \label{fig:example}
\end{figure}

\begin{figure}[h]
  \centering
  \includegraphics[width=1\textwidth]{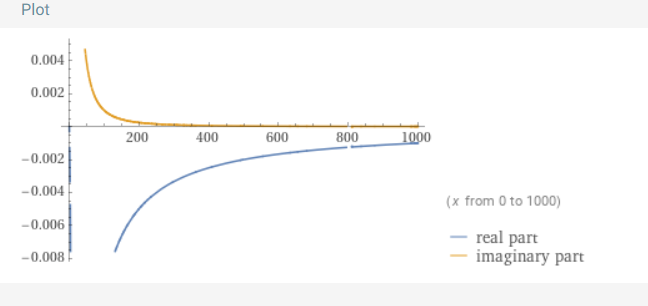}
  \caption{Graph of the real and imaginary parts of $F(\sigma+10i)$.}
  \label{fig:example}
\end{figure}
\end{proof}

\clearpage

\backmatter

\bibliography{sn-bibliography}

\end{document}